\numberwithin{equation}{section}
\newtheorem{theorem}[equation]{Theorem}
\newtheorem*{theorem*}{Theorem}
\newtheorem{corollary}[equation]{Corollary}
\newtheorem{lemma}[equation]{Lemma}
\newtheorem{proposition}[equation]{Proposition}
\theoremstyle{definition}
\newtheorem{definition}[equation]{Definition}
\newtheorem{remark}[equation]{Remark}
\newtheorem{example}[equation]{Example}
\newtheorem*{example*}{Example}
\newtheorem{notation}[equation]{Notation}
\def\CC{\mathbb{C}}
\def\LL{\mathbb{L}}
\def\PP{\mathbb{P}}
\def\RR{\mathbb{R}}
\def\TT{\mathbb{T}}
\def\ZZ{\mathbb{Z}}
\def\cA{\mathcal{A}}
\def\cH{\mathcal{H}}
\def\cL{\mathcal{L}}
\newcommand\tilX{\widetilde{X}}
\newcommand{\Coh}{\textup{Coh}}
\newcommand\Fuk{\textup{Fuk}}
\newcommand{\Ind}{\textup{Ind}}
\newcommand{\Perf}{\textup{Perf}}
\newcommand{\QCoh}{\textup{QCoh}}
\newcommand\res{\textup{res}}
\newcommand\Spec{\textup{Spec}}
\newcommand\Hom{\textup{Hom}}
\newcommand\nc{\newcommand}
\nc\mush{\mu\on{sh}}
\newcommand*{\isoarrow}[1]{\arrow[#1,"\rotatebox{90}{\(\sim\)}"]}
\nc\oX{\overline{X}}
\nc\oD{\overline{D}}
\nc\oT{\overline{T}}
\nc\bsig{\overline{\sigma}}
\nc\Pprop{\on{Perf}_{\on{prop}}}
\nc\Tors{\on{Tors}}
\nc\on{\operatorname}
\nc\ol{\overline}
\nc\ul{\underline}
\nc\Bl{\on{Bl}}
\nc\Conv{\on{Conv}}
\nc\Int{\on{Int}}
\nc\Pc{\mathring{P}}
\nc\oo{\infty}
\nc\Cone{\on{Cone}}
\nc\ssupp{\mathit{ss}}
\nc\risom{\stackrel{\sim}{\to}}
\nc\Sh{\on{Sh}}
\nc\un{\diamondsuit}
\nc\orient{\mathit{or}}
\nc\sing{\mathit{sing}}
\nc\MF{\on{MF}}
\nc\Log{\on{Log}}
\nc\Arg{\on{Arg}}
\nc\inthom{\mathit{Hom}}
\nc\colim{\varinjlim}
\nc\Dmod{\calD}
\nc\hh{\heartsuit}
\nc\mmod{\on{-mod}}
\nc\wsh{\Sh^w}
\nc\wmsh{\mu\Sh^w}
\nc\Conf{\on{Conf}}
\nc\pt{\on{pt}}
\nc\ovi{\overline{i}}
\nc\conv{\mathit{conv}}
\nc\trad{\mathit{inf}}
\nc\wrap{\mathit{wr}}
\nc\tP{\widetilde{P}}
\nc\tX{\widetilde{X}}
\nc\tphi{\widetilde{\varphi}}
\nc\Lmsing{\mathbb{L}^m_{\on{deg}}}
\nc\defpot{\widetilde{\varphi}}
\nc\tcA{\widetilde{\mathcal{A}}}
\nc\rT{\mathrm{T}}
\nc\rS{\mathrm{S}}
\begin{document}

\title{Local mirror symmetry via SYZ}
\author{Benjamin Gammage}
\subjclass{53D37, 14J33}
%\date{\today}
\address{Department of Mathematics, Harvard University,
1 Oxford St., Cambridge MA 02138}
\email{gammage@math.harvard.edu}

\maketitle

\begin{abstract}
In this note, we explain how mirror symmetry for basic local models in the Gross-Siebert program can be understood through the non-toric blowup construction described by Gross-Hacking-Keel. This is part of a program to understand the symplectic geometry of affine cluster varieties through their SYZ fibrations.
\end{abstract}
\setcounter{tocdepth}{1}
%\tableofcontents

\section{Introduction}
In the Gross-Siebert program, a pair of dual spaces is described by the combinatorial data of an integral affine manifold with singularities, which is supposed to be understood as the base of a Lagrangian torus fibration with singularities. Locally near a smooth point of the base, 
the total space looks like the cotangent bundle of a torus, projecting to its cotangent fibers.
%the determination of a torus fibration via the integral affine structure is classical, traditionally understood in terms of action-angle coordinates.

Near singular points in the base, the behavior of a torus fibration can become more complicated as torus fibers acquire singularities. However, in each dimension $N$, there is a finite list of such singularities $\mathfrak{s}_{n,m}$ (with $n+m=N$) which are known to serve as building blocks for a wide class of integral affine manifolds. Above the singularity $\mathfrak{s}_{n,m},$ the symplectic manifold is described by the local model
\[
  X_{n,m}:=\{z_0\cdots z_n = 1 + u_1 + \cdots u_m\mid z_i\in \CC,u_j\in\CC^\times\},
\]
equipped with singular Lagrangian torus fibration $X_{n,m}\to \RR^{n+m}$ by
\begin{equation}\label{eq:SYZ-intro}
  (z_0,\ldots,z_n,u_1,\ldots,u_m)\mapsto (|z_0|^2-|z_1|^2,\ldots,|z_0|^2-|z_n|^2,\log|u_1|,\ldots,\log|u_m|).
\end{equation}
In the case $n=1,$ this SYZ fibration on the conic bundle $X_{1,m}$ was studied extensively in \cite{AAK}.

In this paper, we will show (in \S \ref{sec:trop}) that the spaces $X_{n,m}$ may be described using a generalization of the non-toric blowup construction of \cite{GHK1,GHK2}. 
The mirror to that construction was described in detail in \cite{GL} (following earlier works \cite{STW,Nad-mut,PT}); by generalizing that mirror construction to the case of $X_{n,m},$
we can deduce a mirror symmetry equivalence between these spaces:
\begin{theorem}\label{thm:main}
 There is an equivalence of categories $\Fuk(X_{m,n})\cong \Coh(X_{n,m}).$
\end{theorem}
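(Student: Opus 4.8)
The plan is to combine the geometric description of $X_{n,m}$ obtained in Section~\ref{sec:trop} with the mirror symmetry for non-toric blowups established in \cite{GL}. By Section~\ref{sec:trop}, $X_{n,m}$ is exhibited as a generalized non-toric blowup $X_{n,m}\to V_{n,m}$ of an explicit toric variety $V_{n,m}$, with center the configuration of hypersurfaces cut out by $1+u_1+\cdots+u_m$ inside toric boundary strata; interchanging the roles of the $z$- and $u$-coordinate blocks presents $X_{m,n}$ as the analogous non-toric blowup of the ``transposed'' toric variety $V_{m,n}$, carrying the fibration \eqref{eq:SYZ-intro} with $n$ and $m$ exchanged (with the customary shift between $\CC$- and $\CC^\times$-factors absorbed into the relation between $V$ and $X$). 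It therefore suffices to prove two things: (i) a toric homological mirror symmetry equivalence $\Fuk(V_{m,n})\cong\Coh(V_{n,m})$ compatible with the two torus fibrations; and (ii) that the operation of non-toric blowup on the symplectic side is intertwined, under this equivalence, with the corresponding birational modification on the coherent side, so that performing it on both sides upgrades (i) to $\Fuk(X_{m,n})\cong\Coh(X_{n,m})$.

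For (i) I would invoke the established homological mirror symmetry for toric varieties (the coherent--constructible correspondence, or Abouzaid's tropical localization), after checking that the combinatorial mirror of the stacky fan of $V_{n,m}$ is precisely the fan (equivalently, the Landau--Ginzburg potential) defining $V_{m,n}$. The SYZ map on $V_{m,n}$ is the evident moment-type map, and the content is to verify that it is Legendre/Fourier dual to the one on $V_{n,m}$; this is where the transposition $n\leftrightarrow m$ first enters, and it is the combinatorial heart of the toric step.

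For (ii), recall from \cite{GL} (following \cite{STW,Nad-mut,PT}) that blowing up a toric variety along a non-toric center $Z=\{f=0\}\subset D_\rho$ corresponds on the symplectic side to attaching a Weinstein handle along a copy of the Liouville skeleton of the pair of pants $\{f=0\}$, and on the coherent side to the evident modification of the structure sheaf; when $n=1$ this recovers the conic-bundle picture of \cite{AAK}. The real work is to upgrade this single-center statement to the iterated, specially-shaped blowup of Section~\ref{sec:trop}: one checks that the blowup centers are compatibly nested so the handle attachments may be performed in sequence, and that the resulting skeleton of $X_{m,n}$ is assembled from local pieces --- products of cotangent bundles of tori with pair-of-pants skeleta --- each of which is already known to embed fully faithfully into the appropriate category of coherent sheaves. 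On the coherent side the matching modification of $V_{n,m}$ is literally the algebraic blowup producing the equation $z_0\cdots z_n=1+u_1+\cdots+u_m$, so one recovers $\Coh(X_{n,m})$ on the nose. To promote the resulting fully faithful functor to an equivalence I would then run a generation argument: $\Coh(X_{n,m})$ is split-generated by line bundles since $X_{n,m}$ is smooth and of finite type over a toric variety, and these correspond under the functor to the cocores of the skeleton of $X_{m,n}$, which generate its wrapped Fukaya category.

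I expect step (ii) to be the main obstacle: extending the mirror-of-blowup theorem of \cite{GL} from a single center to the iterated blowup forced by the geometry of $X_{n,m}$, and --- relatedly --- pinning down the stop/wrapping data and the gradings carefully enough that the equivalence holds exactly rather than merely up to a global twist or autoequivalence. A secondary, more bookkeeping-level difficulty lies in step (i): verifying that the naive interchange of the two coordinate blocks in \eqref{eq:SYZ-intro} really is the mirror (Legendre-dual) operation on the toric models, since everything downstream rests on that identification.
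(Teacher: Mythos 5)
Your high-level strategy --- reduce to the toric model $\CC^n\times(\CC^\times)^m$ and then track the non-toric blowup through the mirror equivalence --- is the same strategy the paper implements, but you have misdiagnosed the geometry at the crucial step, and this misdiagnosis hides the actual difficulty and the actual solution. Proposition~\ref{prop:gen-GHK} exhibits $X_{n,m}$ as a \emph{single} blowup of $\CC^n\times(\CC^\times)^m$ along the codimension-two center $H=\{z_1\cdots z_n=0,\ \sum u_i=-1\}$ (minus the proper transform of the boundary), not as an iterated blowup. There are no ``compatibly nested centers'' to check, and no sequence of handle attachments to organize. The genuine new feature is that this one center meets the \emph{singular locus} of the toric boundary $D$ --- the paper points out explicitly that this is exactly the situation that \cite{GL} disregarded, since in the genuine cluster case $H$ sits in the open dense torus of a single boundary component. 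Your proposal leans on ``extending the mirror-of-blowup theorem of \cite{GL}'' without confronting this, and it is precisely the case you would need to extend to; the iterated-blowup framing is a red herring that sends you looking for the obstruction in the wrong place.

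The paper handles this not by trying to prove a functoriality-of-blowup-under-HMS statement, but by computing the Lagrangian skeleton $\LL_{m,n}$ directly (Theorem~\ref{thm:main2}, proved by Morse theory on a tailored local model): $\LL_{m,n}$ is covered by a piece $\cL_1\cong\LL_{\CC^n\times(\CC^\times)^m}$ (the FLTZ skeleton, i.e.\ the toric piece your step~(i) wants) and a piece $\cL_2\cong(\partial\LL_{\CC^n})\times\Lmsing$ (the pants-degeneration piece recording the blowup center), glued along $\cL\cong(\partial\LL_{\CC^n})\times\RR\times T^m$. It then applies the codescent theorem of \cite{GPS3} to express $\Fuk(X_{m,n})$ as a pullback of three explicit module categories (over $R$, $R/(z_1\cdots z_m)$, and $R/(z_1\cdots z_m,\,1+u_1+\cdots+u_n)$), and an elementary algebraic computation --- identifying a module-with-trivialized-cone with a module over $\CC[X_{n,m}]$ --- shows the pullback is $\QCoh(X_{n,m})$. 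Passing to compact objects finishes the proof; there is no separate generation argument needed. So the two approaches share the same decomposition in spirit (your ``local pieces: products of cotangent bundles of tori with pair-of-pants skeleta'' is exactly $\cL_1$ and $\cL_2$), but the paper sidesteps entirely the step you flag as ``the main obstacle'' by never trying to prove that the category ``transforms correctly under blowup''; it instead computes both sides of the putative equivalence and matches them. If you want to salvage your outline, the thing to prove is not a blowup-functoriality statement but the skeleton cover of Theorem~\ref{thm:main2}, after which everything is descent plus algebra.
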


\begin{remark}Theorem \ref{thm:main} as formulated has long been expected; a proof of this theorem from a more traditional Fukaya-categorical perspective, providing a more detailed correspondence between homological mirror symmetry and SYZ geometry, will appear in \cite{AS2x}. Many special cases of Theorem \ref{thm:main} appear in the literature, including the case $m=-1$ (the $(n-1)$-dimensional pants, studied in \cite{Lekili-Polishchuk} and from the perspective of Lagrangian skeleta in \cite{Nwms}), and the case $n=1$ (the substrate of the basic ``generalized cluster transformation,'' studied in \cite{Nad-mut,PT}) -- indeed, in some sense the content of this paper consists in combining those two calculations. (Other special cases of this theorem can be found in \cite{CPU,Pom21}.)
 
 But the broader novelty in this work is in relating the Lagrangian skeleton of $\Fuk(X_{m,n})$ to the geometry of its SYZ fibration \eqref{eq:SYZ-intro}. This is part of a larger program, to be discussed in \S \ref{sec:trop}, to understand Lagrangian skeleta and mirror symmetry for affine cluster varieties via their SYZ fibrations. 
\end{remark}

On the A-side, the main input to this theorem is a calculation of the Lagrangian skeleton $\LL_{m,n}$ of the Weinstein manifold $X_{m,n}.$
Let $\Sigma_{\CC^n\times (\CC^\times)^m}$ be the fan of cones in $\RR^{n+m}$ spanned by subsets of the first $n$ basis vectors $e_1,\ldots,e_n,$ identifying each cone $\sigma$ with the corresponding subset of $[n].$
Introduce the conic Lagrangian 
%$\LL_{\CC^n\times (\CC^\times)^m}\subset \rT^*T^{n+m}$ be the conic Lagrangian
\begin{equation}\label{eq:FLTZ}
  \LL_{\CC^n\times (\CC^\times)^m} := \bigcup_{\sigma\subset [n]} \sigma^\perp\times \sigma\subset \RR^{n+m}/\left(\ZZ+\frac12\right)^{n+m} \times \RR^{n+m} = \rT^* T^{n+m},
\end{equation}
where in the first factor we write $\sigma^\perp$ for the image of the subspace orthogonal to $\sigma$ under the projection $\RR^{n+m}\to T^{n+m}.$
This Lagrangian, studied in \cite{FLTZ1,FLTZ2,FLTZ3}, following earlier work \cite{Bo-ccc}, is known \cite{GS17,Zhou-skel} to be the skeleton of the Liouville-sectorial mirror to the toric variety $\CC^n\times (\CC^\times)^m$ \cite{Ku,Zhou-ccc,V}, and its boundary
\[
  \partial \LL_{\CC^n\times (\CC^\times)^m} = \bigcup_{\emptyset\neq \sigma\subset [n]} \sigma^\perp\times \partial^\infty\sigma\subset \rS^* T^{n+m}
\]
is mirror to the toric boundary $\partial \CC^n\times (\CC^\times)^m$ \cite{Nwms, GS17}.

\begin{remark}
  In \cite{GS17}, we avoided the translation $\ZZ\mapsto \ZZ+\frac12$ appearing in the definition of the quotient torus $T^{n+m}$ in \eqref{eq:FLTZ} by changing the sign of the constant coefficient in $1+u_1+\cdots + u_m.$ In this paper we prefer the more symmetrical convention for this polynomial.
\end{remark}

Observe that $\LL_{\CC^n\times (\CC^\times)^m}$ is globally a product 
\[
  \LL_{\CC^n\times (\CC^\times)^m} = \LL_{\CC^n}\times T^m\subset \rT^*T^n\times \rT^*T^m.
\]
We will be interested in a second Lagrangian which involves the degeneration of the $T^m$ factor in that product. Let $\Lmsing\subset \CC^{m+1}$ be the ``Lagrangian $m$-torus degeneration,''
\[
  \Lmsing := \{(z_0,\ldots,z_m)\mid |z_0| = \cdots = |z_m|, \prod_{i=0}^m z_i\in \RR_{\geq 0} \},
\]
studied in \cite{Nad-CnW,G20} as the skeleton of a mirror to a general linear hypersurface in $(\CC^\times)^m,$ and in \cite{Nad-mut,PT} as part of higher-dimensional cluster theory. Its Legendrian boundary $\partial \Lmsing$ is the $m$-torus $T^m.$

Our main result on the symplectic geometry of symplectic manifold $X_{m,n}$ is a calculation of its Lagrangian skeleton $\LL_{m,n}.$

\begin{theorem}\label{thm:main2}
  The Lagrangian skeleton $\LL_{m,n}$ of $X_{m,n}$ has a cover
  \[
    \LL_{m,n}=\cL_1\cup_{\cL}\cL_2,
  \]
  where $\cL_1\cong \LL_{\CC^n\times(\CC^\times)^m},$ $\cL_2\cong (\partial \LL_{\CC^n})\times \Lmsing,$ and their intersection is $\cL\cong(\partial \LL_{\CC^n})\times \RR \times T^m.$
\end{theorem}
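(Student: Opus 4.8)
The plan is to compute the skeleton of $X_{m,n}$ by realizing $X_{m,n}$ as a hypersurface (or fiberwise construction) built from the toric geometry of $\CC^m\times(\CC^\times)^n$, then applying a gluing/Viterbo-restriction argument for Weinstein skeleta. Concretely, $X_{m,n}=\{z_0\cdots z_m = 1+u_1+\cdots+u_n\}$ fibers over $(\CC^\times)^n$ (coordinates $u_j$) via the right-hand side; away from the hypersurface $\{1+u_1+\cdots+u_n=0\}$ in $(\CC^\times)^n$, the fiber is the $(m-1)$-dimensional pair of pants $\{z_0\cdots z_m = c\}$ with $c\neq 0$, while over the hypersurface the fiber degenerates. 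First I would set up $X_{m,n}$ as a Weinstein sector, with Liouville structure compatible with the SYZ fibration \eqref{eq:SYZ-intro}, and identify the open locus $U_1\subset X_{m,n}$ lying over the complement of the amoeba of $1+u_1+\cdots+u_n$ with a Weinstein subdomain whose skeleton is $\cL_1\cong\LL_{\CC^n\times(\CC^\times)^m}$; this is where the calculation of the ``smooth part'' reduces to the known FLTZ description of the mirror to $\CC^n\times(\CC^\times)^m$ from \cite{FLTZ1,FLTZ2,FLTZ3,GS17,Zhou-skel}. Note that the roles of $n$ and $m$ swap between the A- and B-sides, which is why the toric factor appearing is $\CC^n\times(\CC^\times)^m$ even though we are working on $X_{m,n}$.

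Next I would analyze the second piece $U_2$, a neighborhood of the degenerate locus. Over a small disk transverse to $\{1+\sum u_j = 0\}$, the family $\{z_0\cdots z_m = w\}_{w\in\Delta}$ is exactly the standard degeneration of the pair of pants whose total-space skeleton is the Lagrangian torus degeneration $\Lmsing\subset\CC^{m+1}$, as established in \cite{Nad-CnW,G20,Nad-mut,PT}. The remaining $(\CC^\times)^n$-directions (minus the vanishing hypersurface, i.e.\ its boundary behavior) contribute a factor $\partial\LL_{\CC^n}$: the hypersurface $\{1+u_1+\cdots+u_n=0\}$ in $(\CC^\times)^n$ is, up to the $\ZZ+\frac12$ shift, mirror to the toric boundary of $\CC^n$, so its skeleton is $\partial\LL_{\CC^n}$ (cf.\ \cite{Nwms,GS17}). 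This identifies $\cL_2\cong(\partial\LL_{\CC^n})\times\Lmsing$. Then $\cL=\cL_1\cap\cL_2$ sits over the ``collar'' where the two descriptions overlap: the $\CC^{m+1}$-direction contributes the open dense torus orbit of $\Lmsing$, namely $\RR\times T^m$ (the $T^m$ from the $|z_0|=\cdots=|z_m|$ locus together with the $\RR$ of $\prod z_i\in\RR_{\geq 0}$ before collapsing), and the $(\CC^\times)^n$-direction again contributes $\partial\LL_{\CC^n}$, giving $\cL\cong(\partial\LL_{\CC^n})\times\RR\times T^m$. The gluing $\LL_{m,n}=\cL_1\cup_{\cL}\cL_2$ then follows from a Mayer--Vietoris/Weinstein-handle decomposition argument: one writes $X_{m,n}=W_1\cup W_2$ as Weinstein sectors glued along a hypersurface, with $\on{skel}(W_i)=\cL_i$ and the gluing hypersurface carrying $\cL$, and invokes the compatibility of skeleta with such decompositions.

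The main obstacle, I expect, is making the gluing precise: one must choose the Liouville form on $X_{m,n}$ so that the decomposition into $W_1$ (over the complement of a neighborhood of the amoeba) and $W_2$ (over that neighborhood) is genuinely a Weinstein-sectorial decomposition with Liouville-vector-field behavior controlled along the interface, and one must check that the skeleton doesn't ``jump'' — i.e.\ that no extra stable-manifold directions appear over the interface beyond those recorded in $\cL$. A related subtlety is matching the $\ZZ+\frac12$-shifted torus conventions on the two pieces so that the product decomposition of $\cL_1$ as $\LL_{\CC^n}\times T^m$ and the degeneration picture for $\cL_2$ agree on the overlap; this is essentially the bookkeeping flagged in the Remark about the symmetric convention for $1+u_1+\cdots+u_m$. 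Once the sectorial decomposition is set up correctly, the identification of each piece is a direct appeal to the cited computations of skeleta for toric mirrors and for the pair-of-pants degeneration.
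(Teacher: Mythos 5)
Your approach is genuinely different from the paper's, and it has substantive gaps. The paper does \emph{not} prove Theorem \ref{thm:main2} by decomposing $X_{m,n}$ into two Weinstein sectors and gluing their skeleta. Instead it runs a single global Morse--Bott computation on the K\"ahler potential $\defpot$ from \eqref{eq:mb-pert}, which measures (a perturbation of) the squared distance in the SYZ base from the reference fiber $\TT_\emptyset$ at $\xi=(-\ell,\ldots,-\ell)$. The essential technical tool, which is absent from your proposal, is the \emph{tailoring isotopy} of Mikhalkin and Abouzaid (cited as \cite{M,A06}): deforming $1+\sum u_j$ by the tailoring functions $\psi_i$ replaces $X_{m,n}$ by a symplectomorphic model $\tX_{m,n}$ in which, near each face of the amoeba where the variables in $I\subset[n]$ are relevant, the pants splits as a genuine product $(\CC^\times)^{n-|I|}\times\tP^{|I|-1}$. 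That product structure is what makes the critical-manifold analysis tractable and lets the paper import \cite[Theorem 5.13]{Nwms} and \cite[Theorem 5.3.4]{GS17} wholesale. The decomposition $\LL_{m,n}=\cL_1\cup_\cL\cL_2$ in the theorem statement is then a \emph{description} of the resulting skeleton (the locus where the $T^m$-factor coming from $\Arg(z_0),\ldots,\Arg(z_m)$ is non-degenerate, versus the locus where it collapses to $\Lmsing$), not an input to the computation.

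Beyond the difference of method, there are concrete gaps in your argument. First, your identification $\operatorname{skel}(U_1)\cong\LL_{\CC^n\times(\CC^\times)^m}$ is asserted but not justified: the references \cite{FLTZ1,FLTZ2,FLTZ3,GS17,Zhou-skel} compute the skeleton of $(\CC^\times)^{n+m}$ with its toric-mirror Stein structure (equivalently, of $T^*T^{n+m}$ stopped along $\partial\LL_{\CC^n\times(\CC^\times)^m}$), and it is not clear that the subdomain of $X_{m,n}$ lying over the amoeba complement carries that Weinstein structure. Moreover, your decomposition criterion doesn't match the one implicit in the theorem: the conical ends $\sigma^\perp\times\sigma$ of $\LL_{\CC^n}$ project in the SYZ base along rays heading \emph{into} the amoeba toward the higher-index critical tori $\TT_I$, so $\cL_1$ is not contained over the amoeba complement --- the dichotomy is degenerate-versus-nondegenerate $T^m$, not inside-versus-outside the amoeba. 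Second, the sectorial gluing step --- that $\operatorname{skel}(W_1\cup W_2)=\operatorname{skel}(W_1)\cup_{\operatorname{skel}(\partial)}\operatorname{skel}(W_2)$ once the Liouville flow is controlled at the interface --- is not a black box you can invoke; making that precise here would take at least as much work as the paper's direct Morse--Bott calculation, and you flag this yourself. Finally, a small but confusing slip: the fiber $\{z_0\cdots z_m=c\}$ over $c\neq 0$ is a copy of $(\CC^\times)^m$, not an $(m-1)$-dimensional pair of pants; the pants only appears in the central fiber of the degeneration.
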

The proof of Theorem \ref{thm:main2}, essentially a combination of the calculations in \cite{Nwms,PT}, will be given in \S\ref{sec:skel-calc}. First, In \S\ref{sec:cat-gluing}, we give a proof of Theorem \ref{thm:main} assuming Theorem \ref{thm:main2}.
%By combining the categorical calculations of \cite{Nwms,Nad-CnW} it is then easy to prove Theorem \ref{thm:main}:
Further discussion on the meaning of these results, and their relations to cluster theory and a generalization of \cite{GL}, will be postponed to \S\ref{sec:trop}.%, after the computation of the skeleton $\LL_{m,n}$ in \S \ref{sec:skel-calc}.

We conclude the introduction with an example recalling how our construction recovers the geometry of the ``cluster local model'' $X_{1,1}.$
\begin{example}
  Let $m=n=1,$ so that $\mathfrak{s}_{1,1}$ is the focus-focus singularity, whose local model
  \[
  X_{1,1} = \{z_0z_1 = 1 + u\} = \CC^2 \setminus \{z_0z_1 = 1\}
  \]
  was first studied from the perspective of mirror symmetry in \cite[\S 5]{Aur07}. The Weinstein manifold $X_{1,1}$ admits a Lagrangian skeleton $\LL$ which is the union of a torus $T=T^2$ and a disk whose boundary is glued along a primitive homology class of $T$. In line with Theorem \ref{thm:main2}, we can think of this as a union of two open pieces: a torus with a cylinder attached (the mirror to $\CC\times \CC^\times$) and a disk collapsing the boundary circle of that cylinder to a point.
\end{example}

\section{Gluing microsheaf categories}\label{sec:cat-gluing}
In this section, we show how Theorem \ref{thm:main} follows from Theorem \ref{thm:main2} by gluing together prior microsheaf calculations. We collect those calculations first.

\begin{notation}
    Throughout this section, we write 
    \[
    R:=\CC[z_1,\ldots,z_n,u_1^\pm,\ldots,u_m^\pm].
    \]
    We will also write $\Pi:=z_1\cdots z_n$ and $\Sigma:=1+u_1+\cdots + u_m$ for the respective product and sum, and we will be interested in the quotient rings $R/\Pi,R/\Sigma,$ and $R/(\Pi,\Sigma).$
\end{notation}

\begin{notation}Following the convention of \cite{GS20}, we will write $\mu\Sh$ for the sheaf of microlocal sheaf categories defined in \cite[Ch. 6]{Kashiwara-Shapira} (or technically speaking, the sheafification of the presheaf of categories defined there) and $\mush$ for the sheaf of categories defined in \cite{NS20}. The former of these is defined within a cotangent bundle or cosphere bundle; the latter is defined on more general symplectic or contact manifolds equipped with polarization data. (It is the latter sheaf which is related to the Fukaya category by \cite{GPS3}, but some of the calculations we cite, predating \cite{NS20}, are expressed in the older language.)
We take these to be valued in the presentable category $\CC\mmod$ of $\CC$-modules, although more general choices of coefficients are possible.
\end{notation}
%in a cotangent or cosphere bundle, and $\mush$ for Nadler-Shende microsheaves with respect to some specified polarization.

%The following theorem is a special case of \cite{Ku} but essentially goes back to \cite{Bo-ccc}. (See also \cite{FLTZ1}.) This is an instance of the ``Basic calculation'' described at \cite[\S 7.1]{GS17}: see there for more references.
\begin{theorem}%[{\cite[``Basic calculation,'' \S 7.1]{GS17}}]
	%There is an equivalence of categories 
    There is a commutative diagram
%    $\Sh_{\cL_1}(T^{n+m})\simeq R\mmod.$
\begin{equation}\label{eq:sheaves}
\begin{tikzcd}
%    \Sh_{\cL_1}(T^{n+m})\ar[r]\isoarrow{d} & \mu\Sh(\partial \cL_1)\isoarrow{d}\\
%    \QCoh(\CC^n\times(\CC^\times)^m)\ar[r] & \QCoh((\partial \CC^n)\times (\CC^\times)^m),
    \Sh_{\LL_{\CC^n}}(T^n)\ar[r]\isoarrow{d} & \mu\Sh(\partial \LL_{\CC^n})\isoarrow{d}\\
    \QCoh(\CC^n)\ar[r] & \QCoh(\partial \CC^n),
\end{tikzcd}
\end{equation}
where the vertical maps are equivalences, and the bottom horizontal map is the pullback to the toric boundary of $\CC^n$ on the first factor (and the identity on the second factor).
\label{thm:basic-calculation}
\end{theorem}
\begin{proof}
    The left-hand isomorphism
is a special case of \cite[Theorem 1.3]{Ku} but the idea essentially goes back to \cite{Bo-ccc}. (See also \cite{FLTZ1}.) This is an instance of the ``Basic calculation'' described at \cite[\S 7.1]{GS17}: see there for more references. The existence of the right-hand isomorphism making the square commute is \cite[Theorem 7.13]{GS17}.
\end{proof}
Theorem \ref{thm:basic-calculation} is stated in the language of sheaves and microlocalization within the cotangent bundle $\rT^*T^n.$ For our purposes, it will be necessary to rephrase this in terms of the theory of microlocal sheaves in a general Weinstein manifold.
\begin{lemma}\label{lem:KS-equals-NS}
There is a commutative diagram
\begin{equation}\label{eq:KS-equals-NS}
\begin{tikzcd}
    \Sh_{\LL_{\CC^n}}(T^n)\ar[r]\isoarrow{d} & \mu\Sh(\partial \LL_{\CC^n})\isoarrow{d}\\
    \mush(\LL_{\CC^n})\ar[r]&\mush(\partial \LL_{\CC^n})
\end{tikzcd}
\end{equation}
where the vertical maps are equivalences, and the bottom row is computed using the cotangent fiber polarization of $\rT^*T^n.$
\end{lemma}
\begin{proof}
    For a Legendrian $L\subset S^*X$ in a cosphere bundle, there is a canonical equivalence (\cite[Remark 9.5]{NS20}; see also \cite[Corollary 4.13]{CKNS}) between $\mu\Sh(L)$ and $\mush(L),$ where the latter is computed in the cotangent fiber polarization. However, $\LL_{\CC^n}$ starts life as a conic Lagrangian in a cotangent bundle, rather than a Legendrian in a cotangent bundle. 
    
    As explained in \cite[\S 4E]{GS20}, for $L\subset \rT^*X$ a conic Lagrangian in a cotangent bundle, there is not a canonical equivalence between $\mu\Sh(L)$ and $\mush(L),$ where again the latter is computed in the cotangent fiber polarization. However, \cite[\S 4E]{GS20} constructs a (noncanonical) equivalence between these, which supplies the vertical equivalences in \eqref{eq:KS-equals-NS}.
\end{proof}
%The next two theorems are special cases (for the case of affine space) of microlocal-sheaf homological mirror symmetry, for a toric variety and for its boundary, respectively.
%\begin{theorem}[{\cite[Theorem 7.13]{GS17}}]
%	There is an equivalence of categories 
% $\mush(\cL)\simeq R/\Pi \mmod.$
%\end{theorem}
\begin{theorem}[{\cite[Corollary 1.8]{Nad-CnW}}]\label{thm:deg-calc}
	There is a commutative diagram
 %$\mush(\Lmsing)\simeq \CC[u_1^\pm,\ldots,u_m^\pm]/\Sigma\mmod.$
 \begin{equation}\label{eq:lmdeg-diagram}
 \begin{tikzcd}
     \mush(\Lmsing)\ar[r]\isoarrow{d}&\mush(\partial \Lmsing)\isoarrow{d}\\
     \CC[u_1^\pm,\ldots,u_m^\pm]/\Sigma\mmod\ar[r] & 
     \CC[u_1^\pm,\ldots,u_m^\pm]\mmod,
 \end{tikzcd}
 \end{equation}
 where the bottom horizontal map is restriction of scalars, and
 %$\mush$ is computed using the cotangent fiber polarization of $\CC^{m+1},$ in its standard identification with the cotangent bundle $\rT^*\RR^{m+1}.$
 the polarization data for computing $\mush$ comes from the Legendrian lift of $\Lmsing$ to $S^*(\RR^{m+1}\times \RR)$ discussed in \cite[\S 3.1]{Nad-CnW}.
\end{theorem}
\begin{remark}
    As \cite{Nad-CnW} predates the theory of microlocal sheaves in a general Weinstein manifold constructed in \cite{NS20}, the above calculation is not stated in the language of polarizations; rather, the category of microlocal sheaves on $\Lmsing$ (which is not a conic Lagrangian in a cotangent bundle) is defined to be the category of microlocal sheaves on its Legendrian lift from $\rT^*(\RR^{m+1})$ to $J^1(\RR^{m+1})\subset S^*(\RR^{m+1}\times \RR).$
    Note that as the identification $\CC^{m+1}\simeq \rT^*(\RR^{m+1})$ does not respect the natural Liouville forms, a priori $\Lmsing$ lifts to a Legendrian in the ``wrong'' contact structure on $J^1(\RR^{m+1}),$ viz., the one coming from its identification with the contactization $\CC^{m+1}\times \RR$. However, as explained in \cite[Remark 3.3]{Nad-CnW}, since the difference of the two Liouville forms is exact, the two contact structures may be related by a contactomorphism. 
    %in modern language, the choice of polarization on $\Lmsing$ is precisely the data specifying a class of Legendrian lift to a cosphere bundle.
\end{remark}

We now equip the Lagrangian $\LL_{m,n}=\cL_1\cup_\cL \cL_2$ with the following polarization: 
\begin{itemize}
    \item  On $\cL_1 = \LL_{\CC^n}\times (\CC^\times)^m = \LL_{\CC^n}\times (\partial \Lmsing),$ we take the product of the cotangent fiber polarization on $\LL_{\CC^n}\subset \rT^*T^n$ and the polarization on $\partial \Lmsing$ described in \cite[\S 3.1]{Nad-CnW}.
    \item On $\cL_2 = (\partial \LL_{\CC^n}) \times \Lmsing,$ we again take the product of the cotangent fiber polarization and the polarization described in \cite{Nad-CnW}.
\end{itemize}
As the restrictions of these two polarizations to $\cL$ are canonically identified, the above does indeed define a polarization on the total Lagrangian $\LL_{m,n}.$

\begin{proposition}
    In the above polarization, the category $\mush(\LL_{m,n})$ of microlocal sheaves on $\LL_{m,n}$ may be computed as a homotopy limit
    \begin{equation}\label{eq:prop-limit-diagram1}
    \mush(\LL_{m,n})\simeq \varprojlim
    \left(
        R\mmod \to R/\Pi\mmod \gets R/(\Pi,\Sigma)\mmod
    \right)
    \end{equation}
\end{proposition}
\begin{proof}
    By descent, the category $\mush(\LL_{m,n})$ may be computed as the homotopy limit of the diagram
    \[
        \mush(\cL_1)\to \mush(\cL) \gets \mush(\cL_2),
    \]
    which we may rewrite as
    \[
    \mush(\LL_{\CC^n}\times (\partial \Lmsing)) \to
    \mush((\partial \LL_{\CC^n}) \times (\partial \Lmsing)) \gets
    \mush((\partial \LL_{\CC^n})\times \Lmsing),
    \]
    with the polarizations as given above. Thus, by a K\"unneth theorem, the middle term splits as a tensor product, with the maps being given by the corresponding map from \eqref{eq:KS-equals-NS} or \eqref{eq:lmdeg-diagram} on one factor and the identity on the other factor. From Theorem \ref{thm:basic-calculation}, Lemma \ref{lem:KS-equals-NS}, and Theorem \ref{thm:deg-calc}, we conclude that the above diagram is equivalent to the diagram
%    \[
%    \begin{tikzcd}
%    &(\CC[z_1,\ldots,z_n]/\Pi\mmod)\otimes (\CC[u_1^\pm,\ldots,u_m^\pm]/\Sigma\mmod)\ar[d]
%    \\
%    (\CC[z_1,\ldots,z_n]\mmod)\otimes (\CC[u_1^\pm,\ldots,u_m^\pm]\mmod)\ar[r]&
%    (\CC[z_1,\ldots,z_n]/\Pi\mmod)\otimes (\CC[u_1^\pm,\ldots,u_m^\pm]\mmod),
%    \end{tikzcd}
%    \]
%    or
    \[
      \QCoh(\CC^n)\otimes \QCoh((\CC^\times)^{m}) \to \QCoh(\partial \CC^n)\otimes \QCoh((\CC^\times)^m) \gets \QCoh(\partial \CC^n)\otimes \QCoh\left(\{\Sigma=0\}\right),
    \]
    which agrees with the limit diagram on the right-hand side of \eqref{eq:prop-limit-diagram1}.
\end{proof}

Now we compute the above homotopy limit algebraically.
\begin{proposition}\label{prop:hlim-computation}
    Let $R'$ denote the ring $R':=R[z_0]/(z_0\Pi=\Sigma).$ Then
    there is an equivalence of categories
    \begin{equation}\label{eq:pullback-modcats}
    R'\mmod\xrightarrow{\sim}
    \varprojlim\left(R\mmod \to R/\Pi\mmod\gets R/(\Pi,\Sigma)\mmod\right)
    \end{equation}
    describing the category of $R'$-modules as the indicated homotopy fiber product.
\end{proposition}
\begin{proof}
    An object of the right-hand side of \eqref{eq:pullback-modcats} can be presented as a triple $(M,N,\varphi)$ where $M$ is an $R$-module, $N$ is an $R/(\Pi,\Sigma)$-module, and $\varphi$ is an equivalence of $R/\Pi$-modules between $M\otimes_R R/\Pi$ and $\res^{R/\Pi}_R(N)$, where we write $\res$ for restriction of scalars.
    From this perspective, the functor in \eqref{eq:pullback-modcats} sends an $R'$-module $M'$ to the triple 
    \[
    (\res_{R'}^{R}M', \res_{R'}^{R}(M')\otimes_{R}R/(\Pi,\Sigma),\varphi),
    \]
    where $\varphi$ is the evident isomorphism (coming from the fact that $\{\Pi=0\}$ and $\{\Pi=\Sigma=0\}$ describe the same closed subscheme of $\Spec(R')$).
    
    To see that this functor is an equivalence,
    we will want
    %fully faithful and essentially surjective, it will help to have 
    a different description of the objects of the limit category. We observe that the data of a triple $(M,N,\varphi)$ as above is equivalent to the data of $(M,\xi_0)$ where $M$ is as before and $\xi_0$ is a nulhomotopy of the action of $\Sigma$ on $M\otimes_{R}R/\Pi.$ (Given $(M,N,\varphi),$ we may use $\varphi$ to identify $N$ as an $R/\Pi$-module with $M\otimes_RR/\Pi,$ and $\xi_0$ is the data of a lift of the $R/\Pi$-module structure to an $R/(\Pi,\Sigma)$-module.)
    
    Taking a semi-free resolution of $R/\Pi$ as $R[\eta]$ with $|\eta|=-1$ and $d\eta = \Pi,$ we may write $M\otimes_R R/\Pi\simeq M\otimes_R R[\eta]$ as the complex $M[\eta] = (M\xrightarrow{\Pi}M),$ which is a semi-free $R$-module with basis $\eta,1$. If $M$ is a discrete (i.e., concentrated in degree 0) $R$-module, then on the (vertical) map of complexes (with differentials written horizontally)
    \[
    \begin{tikzcd}
        M\ar[r,"\Pi"]\ar[d,"\Sigma"'] & M\ar[d,"\Sigma"]\\
        M \ar[r,"\Sigma"] & M,
    \end{tikzcd} \quad\text{ the nulhomotopy $\xi_0$ is given by a diagonal map }\quad
%    \]
%    is a diagonal map
%    \[
     \begin{tikzcd}
        M\ar[r,"\Pi"]\ar[d,"\Sigma"'] & M\ar[d,"\Sigma"]\ar[dl,dashed,"z_0"']\\
        M \ar[r,"\Pi"] & M
    \end{tikzcd}
    \]
    satisfying $z_0 \Pi= \Pi z_0 = \Sigma.$

    In general (for $M$ not necessarily a discrete module), 
    %we define $z_0\in \End_R(M)$
    %analogously as the composition $p\circ \xi_0 \circ i,$ where we write
    %$i:M\to \Cone(\Pi)$ and $p:\Cone(\Pi)\to M[1]$ for the canonical maps. As in the discrete case, $z_0$ is an $R$-linear endomorphism of $M$ satisfying $z_0 \Pi = \Sigma.$
    the nulhomotopy $\xi_0,$ as a degree-$(-1)$ endomorphism of $\Cone(M\xrightarrow{\Pi}M),$ has two components $\alpha$ and $z_0$, illustrated as the dashed maps in the diagram
    \[
    \begin{tikzcd}
        M\ar[r,"\Pi"] \ar[d,dashed,"\alpha"']& M\ar[d,dashed,"\alpha"]\ar[dl,dashed,"z_0"']\\
        M\ar[r,"\Pi"]&M,
    \end{tikzcd}
    \]
    where $\alpha$ is a degree-$(-1)$ cochain (not necessarily a cocycle) in the mapping complex $\Hom_{R}(M).$ In terms of $\alpha$ and $z_0,$ the condition that $\xi_0$ is a nulhomotopy of $f$ is the relation
    \[
    z_0\Pi + d\alpha = f,
    \]
    where $d$ is the internal differential on $M$.
    
    In other words, the data of $\xi_0$ is equivalent to the data of a lift of the $R$-module structure on $M$ to an $R[z_0,\alpha]$-module structure, where $\alpha$ has degree-$(-1)$ and satisfies $d\alpha = f - z_0\Pi.$ But the ring $R[z_0,\alpha]$ is equivalent (as a cofibrant replacement) to $R',$ so we conclude that the category of pairs $(M,\xi_0)$ is equivalent to the category of $R'$-modules. It is straightforward to see that the functor \eqref{eq:pullback-modcats} realizes the canonical equivalence.
    %We conclude that the right-hand category of \eqref{eq:pullback-modcats} has objects given by $(M,z_0)$ where $M$ is an $R$-module and $z_0$ is an $R$-linear endomorphism satsifying $z_0\Pi=\Sigma,$ with morphisms given by $R$-module morphisms that commute with $z_0.$ This is precisely the category of $R'$-modules, and the functor \eqref{eq:pullback-modcats} is the canonical equivalence.
\end{proof}

Assuming Theorem \ref{thm:main2}, the proof of Theorem \ref{thm:main} follows immediately from the above:
\begin{proof}[Proof of Theorem \ref{thm:main}]
    Combining \eqref{eq:prop-limit-diagram1} and \eqref{eq:pullback-modcats}, we have an equivalence $\mush(\LL_{m,n})\simeq \QCoh(X_{n,m}).$ Theorem \ref{thm:main2} asserts that $\LL_{m,n}$ is the skeleton of the Weinstein manifold $X_{m,n},$ and we may therefore apply \cite[Theorem 1.4]{GPS3} to produce an equivalence $\mush(\LL_{m,n})\simeq \Ind\Fuk(X_{m,n})$ (where each is defined with the stable polarization constructed above).
%    \footnote{Naively, \cite[Theorem 1.4]{GPS3} gives an equivalence between microlocal sheaves and the opposite of the Fukaya category}
    We deduce Theorem \ref{thm:main} by combining these equivalences and passing to compact objects.
\end{proof}

\section{Calculating the skeleton}\label{sec:skel-calc}
Let $\varphi:X_{m,n}\to \RR$ be a Morse-Bott K\"ahler potential for the Stein manifold $X_{m,n},$ so that its Stein K\"ahler form is $\omega=dd^c \varphi,$ which has primitive $\lambda = d^c\varphi.$ 
Morse theory gives a handle presentation of the manifold $X_{m,n},$ where only critical points of index up to $m+n$ occur, and the union of stable manifolds for the critical points is a singular Lagrangian subset $\LL\subset X_{m,n},$ the {\em skeleton} of $X_{m,n}.$ These Lagrangian skeleta are of great interest thanks to the fact that the Fukaya category $\Fuk(X)$ of a Stein manifold $X$ may be computed locally over its Lagrangian skeleton $\LL$ \cite{GPS1,GPS2,GPS3,NS20}.

Let $\varphi_1:\CC^{m+1}\to \RR$ and
and $\varphi_2:(\CC^\times)^n\to \RR$ denote the respective functions
\[
\varphi_1(z):=\sum_{i=1}^m\left(|z_0|^2-|z_i|^2\right)^2, \qquad
\varphi_2(u):= \sum_{j=1}^n(\log|u_i|+\ell)^2,
\]
where we have fixed some $\ell\ll 0.$
We will write
\[
\varphi(z,u):=\varphi_1(z)+\varphi_2(u)
\]
for the sum of these functions.

The function $\varphi$ is natural from the perspective of the SYZ fibration $\pi:X_{m,n}\to \RR^{m+n}$ defined by
\begin{equation}\label{eq:SYZ}
  \pi(z,u) := (|z_0|^2-|z_1|^2,\ldots,|z_0|^2-|z_m|^2, \log|u_1|,\ldots,\log|u_m|),
\end{equation}
so that $\varphi$ can be written as $\varphi(z,u)=d(\pi(z,u),(\vec{0},\vec{\ell})),$ where $d$ is the distance function on $\RR^{n+m}.$ In what follows, we will write $(\eta_1,\ldots,\eta_m,\xi_1,\ldots,\xi_m)$ for the natural coordinates on this SYZ base $\RR^{n+m}.$
Note that the general fiber of this fibration is an $(n+m)$-torus, and it degenerates where $\xi$ is in the image of the map
\begin{align}\label{eq:pants-log}
  \Log:\{1+u_1+\cdots +u_n = 0\mid u_i\in \CC^\times\}\to \RR^{n}, && \vec{u}\mapsto (\log|u_1|,\ldots, \log|u_n|)
\end{align}
and at least one of the $\eta_i$ is zero.

As a symplectic hypersurface, $X_{m,n}\subset \CC^{m+1}\times(\CC^\times)^n$ inherits a Stein Liouville structure from the restriction of the ambient structure on $\CC^{m+1}\times (\CC^\times)^n$. For compatibility with the SYZ fibration, we would like equip $\CC^{m+1}$ and $(\CC^\times)$ with the Stein structures defined by the respective potential functions $\varphi_1$ and $\varphi_2.$
However, for our purposes, it will be useful to deform the Liouville forms on both $\CC^{m+1}$ and $(\CC^\times)^n.$ We begin with $\CC^{m+1}.$

The obvious problem with using the function $\varphi_1$ to define the Stein structure on $\CC^{m+1}$ is that it is not quite a Stein potential: the form $dd^c\varphi_1$ is not positive but only semipositive, vanishing when at least two of the coordinates $z_i$ are 0. We will fix this by adding a perturbing term to $\varphi_1.$

For $0\leq i\neq j\leq m,$ let $r_{ij}:=|z_i|^2+|z_j|^2$, and let $\chi:\RR\to \RR$ be a smooth cut-off function, such that for some fixed $\alpha_1>\alpha_0>0,$ the function $\chi$ satisfies
\[
\chi(t)=\begin{cases}
    1 \text{ where } t<\alpha_0,\\
    0 \text{ where } t>\alpha_1.
\end{cases}
\]

\begin{lemma}\label{lem:kahler-def}
    Choose $0<\epsilon\ll \alpha_1,$ and let $\defpot_1:\CC^{m+1}\to\RR$ be the function
    \[
    \defpot_1(z):=\varphi_1(z)+\epsilon\sum_{i\neq j} \chi(r_{ij})r_{ij}.
    \]
    Then the function $\defpot_1(z)$ is a K\"ahler potential on $\CC^{m+1}.$
\end{lemma}
\begin{proof}
    First consider the regions where $\chi(r_{ij})$ is constant: i.e., for each $i\neq j,$ either $r_{ij}>\alpha_1$ or $r_{ij}<\alpha_0.$ In each case positivity of $dd^c\defpot_1$ is clear. In the former case, the second term vanishes, but the first term already defines a positive $(1,1)$-form. In the latter case, each term is semi-positive, and the second term is positive in those directions in which the first term isn't, so the total sum is positive.

    Thus it remains to consider the case where $\alpha_0<r_{ij}<\alpha_1.$ In this region, the perturbation term in $\defpot_1$ may fail to define a semi-positive $(1,1)$-form, since the function $\chi(r_{ij})$ is not plurisubharmonic. However, by compactness of the interval $[\alpha_0,\alpha_1],$ we can choose $\chi$ such that its $C^2$ norm on $[\alpha_0,\alpha_1]$ is bounded by a constant $C$ (depending on $\alpha_1-\alpha_0$). Since the same is true of the function $r_{ij}$ in the region $\alpha_0<r_{ij}<\alpha_1,$ we conclude that there exists an overall constant bounding 
    $\left|dd^c\left( \chi(r_{ij})r_{ij}
    \right)\right|$ in the region $\alpha_0<r_{ij}<\alpha_1.$ On the other hand, in the region where $r_{ij}>\alpha_0,$ $|dd^c\varphi_1|$ is bounded below. Putting these together, we conclude that for $\epsilon$ sufficiently small, the total function $\defpot_1$ will continue to define a positive $(1,1)$-form in this region.
\end{proof}

%The Morse-Bott K\"ahler potential with which we equip $X_{m,n}$ will be a deformation of the function
%\begin{equation}\label{eq:mb-unperturbed}
%  \varphi(z,u) := \sum_{i=1}^m (|z_0|^2-|z_i|^2)^2 + \sum_{j=1}^n (\log|u_i| + \ell)^2,
%\end{equation}
%where we have fixed some $\ell \ll 0.$

%However, the function $\varphi$ is not actually a K\"ahler potential on $X_{m,n},$ since the 2-form $dd^c\varphi$ becomes degenerate where two or more of the $z_i$ coordinates become zero. To remedy this, we perturb $\varphi$ to an actual K\"ahler potential
%\begin{equation}\label{eq:mb-pert}
%  \defpot(z,u):=\varphi(z,u)+\varepsilon\chi(z_0,\ldots,z_m)\sum_{i=0}^m|z_i|^2,
%\end{equation}
%where $\chi$ is a function which is equal to 1 within a neighborhood of the region where any two of the $z_i$ are 0, and equal to 0 outside a neighborhood of this region. In the region where $\chi$ is 1, this term will guarantee positivity of $dd^c \defpot,$ and in the region where $\chi$ changes from 1 to 0, its contribution to $dd^c \defpot$ is bounded, so for $\varepsilon$ sufficiently small, the form $dd^c\defpot$ will remain positive in this region, ensuring that $\defpot$ is indeed a K\"ahler potential.

From now on, we write $\defpot:\CC^{m+1}\times (\CC^\times)^n\to \RR$ for the perturbed function
\begin{equation}\label{eq:mb-pert}
\defpot(z,u):=\defpot_1(z)+\varphi_2(u).
\end{equation}
Lemma \ref{lem:kahler-def} ensures that this perturbation of $\varphi$ is the potential for a Stein structure on $\CC^{m+1}\times(\CC^\times)^n.$

Both the original function $\varphi$ and the deformation $\defpot$ have a Morse-Bott manifold of minima given by the $(n+m)$-torus
$\TT:=\{\eta_i=0, \xi_j=\ell\},$ 
%and the K\"ahler potential $\defpot(z,u)$ has a minimum along a
%small perturbation of this torus;
and the remaining critical points of $\defpot$ also project under $\pi$ to the 
subspace $\cH=\{\eta=0\}\subset \RR^{n+m}.$
%we may restrict our attention to this subspace $\cH.$
%To determine these higher-index critical points, we will imitate 
To simplify the determination of these higher-index critical points, we will imitate
\cite{M,A06,Nwms,GS17} in applying a deformation to the Liouville structure on $(\CC^\times)^n.$
%performing a symplectic isotopy to $X_{m,n}$ by deforming the right-hand side of the equation $\prod z_i = 1+\sum u_j.$ 

Within $\cH,$ the singular locus of the fibration $\pi$ coincides with the {\em amoeba} $\cA,$ defined as the image of the projection \eqref{eq:pants-log}.
The amoeba $\cA$ contains within it a {\em spine}, the tropical hypersurface $\Pi$ defined as the corner locus of the function $\max(0,\xi_1,\ldots,\xi_n).$
Observe that the complement $\cH\setminus \Pi$ consists of $n$ components, one of which is the ``all-negative'' orthant, and each of the others can be reached from this one by crossing one of the hyperplanes $\{\xi_i = 0\}.$ 
%The case $n=2$ is depicted in Figure \ref{fig:1d-trop}.
%\begin{figure}\label{fig:1d-trop}
%  \caption{The amoeba $\cA$ and spine $\Pi$ for the 1-dimensional pair of pants $\{u_1+u_2+1=0\}.$}
%\end{figure}
The components of this complement may be equivalently be described as the $\Log$ images of the loci where one monomial in the function $1+\sum u_i$ dominates all the others. We will write $C_i$ for the component where $u_i$ dominates, and $C_0$ for the bottom-left orthant, where $1$ dominates.

We will deform the space $X_{m,n}$ by a symplectic isotopy which brings the amoeba closer to the spine $\Pi,$ making it easier to understand the Liouville dynamics of $X_{m,n}.$ This trick was first used in \cite{M} and later elaborated in \cite[\S 4]{A06}, which we follow here. For $0\leq i \leq n,$ we pick a {\em tailoring function} $\psi_i$ (which is called $\phi_\alpha$ in \cite{A06}) satisfying
\begin{align*}
  \psi_i(\xi) = 
  \begin{cases}
    0& \text{when }d(\xi,C_i) \leq \frac{\epsilon}{2}\\
    1& \text{when }d(\xi,C_i) \geq \epsilon,
  \end{cases}
  && \text{and} &&
  \sum_{j=1}^n \left|\frac{\partial \psi_i}{\partial \xi_j}\right| < \frac{4}{\epsilon},
\end{align*}
for some $\epsilon\ll \ell,$
and we define the family of maps $\{f_s:(\CC^\times)^n\to \CC\}_{0\leq s\leq 1},$ by
\begin{equation}\label{eq:tailored-pants-equation}
  f_s(u_1,\ldots,u_n):= (1-s\psi_0(u)) + \sum_{i=1}^n (1-s\psi_i(u))u_i.
\end{equation}

  Consider the family $P^{n-1}_s:=\{f_s=0\}\subset (\CC^\times)^n.$ 
	For $s=0,$ the hypersurface $P^{n-1}_0$ is the usual pair of pants $\{1+\sum_{i=1}^n u_i=0\},$ and when $s=1,$ for $\log|u_i|\ll 0,$ the coordinate $u_i$ does not appear in the defining equation of $P^{n-1}_1,$ so that in that region $P^{n-1}_1$ is a product of $\CC^\times_{u_i}$ with $P^{n-2}_1.$
%\begin{proposition}[{\cite[\S 4]{A06}}]
%  The family $P^{n-1}_s:=\{f_s=0\}\subset (\CC^\times)^n$ is a family of symplectic hypersurfaces, related by a symplectic isotopy of the ambient $(\CC^\times)^n.$ For $s=0,$ the hypersurface $P^{n-1}_0$ is the usual pair of pants $\{1+\sum_{i=1}^n u_i=0\},$ and when $s=1,$ for $\log|u_i|\ll 0,$ the coordinate $u_i$ does not appear in the defining equation of $P^{n-1}_1,$ so that in that region $P^{n-1}_1$ is a product of $\CC^\times_{u_i}$ with $P^{n-2}_1.$
%\end{proposition}
	\begin{proposition}[{\cite[Proposition 4.2]{A06}}]
		Near $\{f_s=0\},$ we have $|\bar{\partial}f_s| < |\partial f_s|,$ and therefore the family $P^{n-1}_s:=\{f_s=0\}\subset(\CC^\times)^n,$ for $0\leq s\leq 1,$ is a family of symplectic hypersurfaces.
	\end{proposition}
	As a result, $f_s$ is a symplectic fibration in a neighborhood of each $P^{n-1}_s,$ so that we can use the symplectic connection to lift $\frac{\partial f_s}{\partial s}$ to a vector field $Y_s$ which will flow $P^{n-1}_0$ to $P^{n-1}_1.$ The essential result which allows us to integrate the vector field $Y_s$ is the following, which is due to the fact that the isotopy $f_s$ was constructed to modify only the subdominant terms in $f_0$:
	\begin{lemma}[{\cite[Lemma 4.11]{A06}}]\label{lem:boundedness}
		The vector field $Y_s$ is bounded: there is a constant $C$ so that $|Y_s|<C$ everywhere (where the norm $|-|$ is taken with respect to the K\"ahler metric from $(\CC^\times)^n$).
	\end{lemma}
	Lemma \ref{lem:boundedness} allows the application of the Moser trick:
	\begin{corollary}[{\cite[Lemma 4.13]{A06}}]\label{cor:moser}
		There exists a Hamiltonian time-dependent vector field $Y_s'$ on $(\CC^\times)^n,$ supported in a neighborhood of $P^{n-1}_s,$ with germ at $P^{n-1}_s$ $C^\infty$-close to $Y_s$, 
		which is bounded and hence
		integrates to a symplectic flow that maps $P^{n-1}_0$ to $P^{n-1}_1.$
	\end{corollary}

	\begin{definition}
  Following \cite{Nwms}, we call the space $P^{n-1}_1$ resulting from applying this isotopy the {\em tailored pants} and denote it also by $\tP^{n-1}.$ 
  The regions $\log|u_i|\ll 0 $ on $\tP^{n-1}$ will be called the {\em legs} of the tailored pants, and the projection of $\tP^{n-1}$ under the $\Log$ map will be denoted by $\tcA$ and called the {\em tailored amoeba}.
	Similarly, we denote by $\widetilde{X}_{m,n}\subset \CC^{n+1}\times (\CC^\times)^m$ the space obtained by applying this isotopy to $(\CC^\times)^m.$ We refer to $\widetilde{X}_{m,n}$ as the {\em tailored local model}.
	%defined by the equation $\{z_0\cdots z_m = f_1(u_1,\ldots,u_n)\}$ and call it the {\em tailored local model}. 
\end{definition}
\begin{remark}
  The singularities of the SYZ fibration $\pi$ on $X_{m,n}$ coincide precisely with the amoeba $\cA\subset\cH,$ and the effect of the tailoring isotopy is to make that amoeba a better approximation of the tropical hypersurface $\Pi$. From the perspective of tropical geometry, it would be desirable to have a more elaborate modification of the SYZ fibration which makes the discriminant locus agree precisely with $\Pi.$ Such a modification appears difficult, but some progress can be found in \cite{RZ2,EM}.
\end{remark}
%{\color{blue} My additions start here.}

Like $X_{m,n},$ the space $\widetilde{X}_{m,n}$ inherits a Liouville structure from its embedding into $\CC^{m+1}\times(\CC^\times)^n$; by pulling back along the deformation relating these spaces, we can understand $X_{m,n}$ and $\tilX_{m,n}$ as the same underlying symplectic manifold $X_{m,n}$, carrying a 1-parameter family of Liouville structures $\lambda_s,$ such that $\lambda_0$ is the original Liouville structure on $X_{m,n}$ and $\lambda_1$ is the Liouville structure on $\tilX_{m,n}.$ We will use the latter Liouville structure in our computation of the skeleton. 

In order to justify this, we must check that this family of Liouville structures is a Liouville homotopy in the sense of \cite{Cieliebak-Eliashberg}:

%Pullback along the vector field $Y_s'$ defines a family of Liouville structures on the same underlying symplectic manifold $((\CC^\times)^n,\omega).$ We will now show that the Liouville structures in this family are isotopic in the sense of \cite{Cieliebak-Eliashberg}.
\begin{definition}[{\cite[Definition 11.5]{Cieliebak-Eliashberg}}]\label{def:Liouville-isotopy}
    A family of Liouville structures $(X,\omega =d\lambda_s),$ for $s\in[0,1]$ is a {\em Liouville homotopy} if there exists a smooth family of exhaustions $X=\bigcup_{k=1}^\infty X_s^k$ by compact domains $X_s^k\subset X$ with smooth boundaries along which the Liouville vector field is outward-pointing.
\end{definition}
\begin{lemma}[{\cite[Lemma 11.6]{Cieliebak-Eliashberg}}]
    A family of finite-type Liouville manifolds $X_s=(X,\omega=d\lambda_s)$ is a Liouville homotopy if the closure $\overline{\bigcup_{s\in[0,1]}\on{Skel}(X_s)}$ of the union of their skeleta is compact.
\end{lemma}

\begin{remark}
    The point of Definition \ref{def:Liouville-isotopy} is to
    %definition of Liouville homotopy 
    ensure that such a homotopy cannot modify the Liouville structure in an interesting way at infinity. To see that this is a desirable definition, observe (\cite[Exercise 4]{Courte-Liouville}) that any Liouville structure on $\RR^{2n}$ can be deformed to the standard one (in a manner not necessarily satisfying Definition \ref{def:Liouville-isotopy}), whereas the conical Liouville structure on $\RR^{2n}$ should not be regarded as equivalent to the cotangent Liouville structure on $\rT^*{\RR^n} = \RR^{2n}.$
\end{remark}

\begin{proposition}\label{prop:liouville-homotopy}
    The family of Liouville structures interpolating between $X_{m,n}$ and $\tilX_{m,n}$ is a Liouville homotopy.
\end{proposition}
\begin{proof}
    For $0\leq s\leq 1,$ write $X_{m,n}(s)$ for the (symplectically isomorphic) family of submanifolds of $(\CC^\times)^n$ interpolating between $X_{m,n}$ and $\tilX_{m,n}$ by the flow of the vector field $Y_s.$ Observe that the Liouville form $\lambda_s$ is the pullback to $X_{m,n}(s)$ of the Liouville form $d^c\defpot$ on $\CC^{m+1}\times (\CC^\times)^n,$ and therefore the Liouville vector field on $X_{m,n}(s)$ is gradient-like (under the metric inherited from the K\"ahler metric on the ambient $\CC^{m+1}\times (\CC^\times)^n$) for the function $\defpot$.
    %: if we write $V$ for the Liouville vector field and $W$ for any other tangent vector to $X_{m,n}(s),$ then $\omega(V,W)$ can be computed

    It will therefore be sufficient to show that for $N\gg 0,$ the function $\defpot|_{X_s}$ has no critical points on $\defpot^{-1}((N,\infty)).$ This is obvious except where the $z_i$ coordinates are 0, so we may reduce to the problem of establishing the corresponding fact for $|\Log-\ell|^2|_{P^{n-1}_s},$ where  $P^{n-1}_s\subset (\CC^\times)^n$ is the partially tailored $(n-1)$-dimensional pants defined by the equation \eqref{eq:tailored-pants-equation}.

   In other words, 
   %writing $\overline{P}^{n-1}_s$ for the compactification of the pants in 
   in terms of the compactification $(\CC^\times)^n\subset \PP^n,$
   %$\PP^n\supset (\CC^\times)^n,$ 
   we must show that there exists a neighborhood $\overline{U}\subset \PP^{n-1}$ of $\partial \PP^{n-1}$ 
   such that the gradient of $|\Log-\ell|^2$ on $P^{n-1}_s$ does not vanish within $U:=P^{n-1}_s\cap \overline{U}$ for any $s.$ Moreover, as this calculation, which is taking place at infinity, is insensitive to $\ell,$ we may as well take $\ell=0$ for simplicity.
   
   Let $a\in \overline{P}^{n-1}_s$ be a point in the compactification of $P^{n-1}_s$ inside $\PP^n.$ By symmetry, we may assume that $a\in \CC^n$. This point is contained in $k$ boundary divisors, for some $1\leq k \leq n-1$; again by symmetry, we may write $a=(a_1,\ldots,a_n)$ where $a_1=\cdots=a_k = 0,$ and $a_{k+1},\ldots,a_n\neq 0,$ 
   %We know that at least one $a_i$ must satisfy $|a_i|> \frac{1}{n}$; we may take this to be $a_n.$ We may 
   and we may take $a_n$ to be the largest of the $a_i$. 
   Since at least one $a_i$ must satisfy $|a_i|>\frac{1}{n},$ we know in particular that $|a_n|>\frac{1}{n}$. Note also that in  a neighborhood of $a$, the final monomial in $1+u_1+\cdots+u_n$ is a dominant term, so we have $\psi_n$ identically 0 near $a,$ where $\psi_n$ is the cut-off function used to define $f_s.$

   We will now show that for small $\epsilon$, the gradient of $|\Log|^2$ on $P^{n-1}_s$ does not vanish for any $u=(u_1,\ldots,u_n)\in P^{n-1}_s$ within an $\epsilon$-neighborhood of $a$ -- i.e., such that $|u_i-a_i|<\epsilon$ for all $i$.
   Observe that our assumptions ensure that for $u$ near $a$, the germ of curve $(u_1+tu_1,u_2,\ldots,u_{n-1},u_n+tu_1(1-s)\psi_1)$ at $u$ lies in $P^{n-1}_s$ and thus defines a tangent vector $V\in T_u(P^{n-1}_s)$; in real coordinates $u_i=r_ie^{\sqrt{-1}\theta_i},$ we may write this tangent vector as
   \[
   V=\partial_{r_1}+(C\partial_{r_n}+C'\partial_{\theta_n})
   \]
   for some real constants $C,C'.$
   Since $|u_n|>\frac{1}{n}-\epsilon$ and $|u_1|<\epsilon,$ and $|\psi_1|<1,$ we see that
   by choosing $\epsilon$ sufficiently small, we may ensure $|C|,|C'|\ll 1$ for all $s.$
   We would like a lower bound for $|d|\Log|^2|$ evaluated on $V$.

   Since $d|\Log|^2 = \sum d(\log|u_i|)^2 = \sum \frac{\log(r_i)}{r_i}dr_i,$ we have 
   \begin{align}
   \left|\left(d|\Log|^2\right)(V)\right|&= \nonumber
   \left|\frac{\log(r_1)}{r_i}+C\frac{\log(r_n)}{r_n}\right|\\ 
   &\geq \left|\frac{\log(r_1)}{r_i}\right|-C\left|\frac{\log(r_n)}{r_n}\right|.
   %\label{eq:difference-gradbound-withpsi}
%   &\geq \left|\frac{\log|u_1|}{u_i}\right|-\left|\frac{\log|u_n|}{u_n}\right|,
   \label{eq:difference-gradbound}
   \end{align}
   Since $0<r_1<\epsilon \ll 1,$ the first term is at least $|\log(\epsilon)/\epsilon|,$ which is large and grows monotonically as $\epsilon$ decreases, whereas
   since $r_n>\frac{1}{n}-\epsilon$, the term $|\log(r_n)/r_n|$ is bounded, and since $C$ can be made arbitrarily small by decreasing $\epsilon,$ we conclude that
   %$|u_1|<\epsilon\ll 1$ and $|u_n-a_n|<\epsilon$ where $|a_n|>\frac{1}{n},$ 
   the difference \eqref{eq:difference-gradbound}
 is bounded away from 0, as desired.
 \end{proof}

\begin{remark}
    The proof of Proposition \ref{prop:liouville-homotopy} may be summarized as, ``apply the usual proof that the hypersurface $X_{m,n}$ is a finite-type Stein manifold, and check that it continues to hold uniformly over the deformation.'' In fact, this first statement (viz., that the Stein structure on an affine algebraic variety is finite-type) is not well-documented in the literature; it appears to have been a folklore theorem for some time, although a proof can be found as Lemma 8 of \cite{seidel-smith-ramanujam}. In any case, the second step in this case (showing that the deformation remains finite-type) requires almost no extra work, since increasing $s$ only improves the bound on \eqref{eq:difference-gradbound} by taking $C$ closer to 0.
    %the tailoring term only improves the relevant estimates (as was used in the passage from \eqref{eq:difference-gradbound-withpsi} to \eqref{eq:difference-gradbound}).
\end{remark}

%{\color{blue}Things below this line will be edited.}

With the tailored local model $\widetilde{X}_{m,n}$ in hand, we are ready to study the Liouville geometry of $\tX_{m,n}$. 
As the Liouville structure is gradient-like for the restriction
(along the inclusion $\tX_{m,n}\subset \CC^{m+1}\times(\CC^\times)^n$) of the function $\defpot$ defined in \eqref{eq:mb-pert},
we are reduced to studying the Morse-Bott theory of this function.
%using the almost K\"ahler potential $\defpot$ 
%as in Corollary \ref{cor:liouville-deformation}.
%defined above in \eqref{eq:mb-pert}. 
%However, we must first confront the problem that even before deforming $X_{m,n}$ to $\tX_{m,n},$ the function $\varphi$ defined above is not a K\"ahler potential, as the form $dd^c\varphi$ becomes degenerate whenever two or more of the coordinates $z_i$ are equal to 0.
Our Morse-Bott calculation will imitate that performed in \cite{Nwms,GS17}, relying heavily on the inductive structure of the tailored pants $\tP^{n-1}.$

\begin{proof}[Proof of Theorem \ref{thm:main}]
  It will be useful to study the potential $\defpot$ via the singular torus fibration \eqref{eq:SYZ}. As we have noted, we may restrict our attention to the locus $\cH = \{\eta = 0\}= \RR^n\subset \RR^{m+n}.$ (For the remainder of this proof, we will always take the coordinates $\eta_i$ to be equal to 0.) We will begin from the index-0 critical locus, which we denote by
\[
	\TT_\emptyset:=\{\xi=(-\ell,\ldots,-\ell)\},
\]
and attach stable manifolds for the critical points of progressively higher index.

  Fix some $i\in\{1,\ldots,n\}$ and consider the region in $\cH$ where $\xi_i=0$ and $\xi_j\ll 0$ for $j\neq i.$ In this region, the tailored amoeba $\tcA^n$ is a hyperplane, and the tailored pants $\tP^{n-1}$ is defined by the equation $\{u_i+1=0\}.$ It is clear that in this region, the function $\defpot$ has an index-1 Morse-Bott critical locus at the $(n-1)$-torus 
  \[\TT_i:=\{u_i=-1,\log|u_j| = -\ell\mid j\neq i\}.\]
  The stable set for this critical locus projects to $\cH$ along the interval $\{0\geq \xi_i\geq -\ell,\xi_j=-\ell\mid j\neq i\}$ (or possibly a small perturbation of this interval, depending on how the bump function $\chi$ used in defining $\defpot$ is chosen), and it attaches to $\TT_\emptyset$ as the Legendrian whose front projection is the codimension-1 subtorus obtained by fixing $\Arg(u_i).$

%  Now we proceed to the index-2 critical manifolds. Fix some $i\neq j$ and consider the region in $\cH$ where $\xi_i=\xi_j=0$ and $\xi_k\neq 0$ for $k\neq i,j.$ In this region, the tailored pants is a product $\tP^1\times (\CC^\times)^{n-2}$
  The index-$k$ critical manifolds, for $1<k\leq n,$ are computed similarly. We fix some $I=\{i_1,\ldots,i_k\}\subset [n]$ and consider the region where $\xi_i=0$ for $i\in I$ and $\xi_j \ll 0$ for $j\notin I.$ In this region, the tailored pants $\tP^n$ looks like a product $(\CC^\times)^{n-k}\times \tP^{k-1},$ and the calculation of \cite[Theorem 5.13]{Nwms}, repeated in more detail in the proof of \cite[Theorem 5.3.4]{GS17}, shows that there is a single critical manifold in this region, lying over the point in the boundary of the tailored amoeba $\tcA$ where $\xi_i=\xi_j$ for all $i,j\in I,$ and $\xi_j = -\ell$ for all $j\notin I.$ This critical manifold $\TT_I$ is a $(n-k)$-torus, obtained by fixing the values of $u_i$ for $i\in I$ and $\log|u_j|$ for $j\notin I.$

  Moreover, the attachments for these critical points are just as described in \cite[Theorem 5.3.4]{GS17}, giving the FLTZ skeleton of the pants, attaching to $\TT_\emptyset$ in the expected way.
\end{proof}

\begin{example}Let $n=2.$ Then the region $\cH$ in the SYZ base is a plane, depicted in Figure \ref{fig:syz-skel} together with the tailored amoeba $\tcA$ (containing the tropical curve $\Pi$ for reference).
  \begin{figure}[h]
    \begin{center}
      \includegraphics[width=2in]{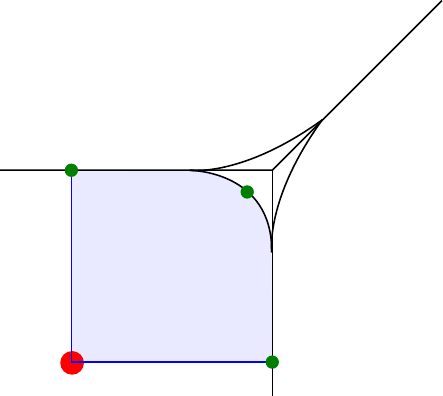}
    \end{center}
    \caption{The plane $\cH,$ which lives inside the SYZ base of $X_{m,2}.$ The Lagrangian skeleton is the union of the torus fiber over the red point and a piece projecting to the blue shaded region. The higher-index critical points are depicted in green.}
    \label{fig:syz-skel}
  \end{figure}
  Near the torus fiber over the red point, the skeleton looks like the product of the FLTZ skeleton $\LL_{\CC^2}$ with an $m$-torus $T^m$ parametrized by $\Arg(z_1),\ldots,\Arg(z_m).$ This $T^m$ factor collapses to a point over points in the base where the blue region meets the boundary of the amoeba.
\end{example}

\section{SYZ geometry and cluster theory}\label{sec:trop}
In this section, we connect the results of this paper to the non-toric blowup construction of Gross-Hacking-Keel, and explain how our results fit into a program to understand mirror symmetry for a general affine cluster variety.

In \cite{GHK1,GHK2}, it was proposed that a cluster variety $X$ should be studied via its relation to a {\em toric model}: a toric variety $\oX$ containing a codimension-2 subvariety $H$ inside its toric boundary divisor $D$ such that $X$ embeds in the blowup $\Bl_H\oX$ as the complement of the proper transform $\widetilde{D}$ of the toric boundary divisor $D.$ 
The requirement that the result $X$ of this construction is a cluster variety imposes strong constraints on $H,$ which must be defined by a character of the dense torus in its component of $D$. 

In the case at hand, we relax this constraint, allowing $H$ to be defined by a general linear polynomial. We also allow contributions from the places where $H$ meets the singularities of the boundary divisor $D$. With these modifications to the above construction, we can produce our spaces $X_{n,m}.$

Let $\oX=\CC^n\times (\CC^\times)^m,$ with coordinates $z_1,\ldots,z_n,u_1,\ldots,u_m.$ 
Write $D:=\{z_1\ldots z_n=0\}\subset \oX$ for the toric boundary divisor, and consider the non-toric subvariety $H=\{z_1\ldots z_n=0, \sum u_i=-1\}\subset D$.  
\begin{proposition}\label{prop:gen-GHK}
  Let $X=\Bl_H\oX\setminus \widetilde{D}$ be the result of blowing up $H$ and then deleting the proper transform of the boundary divisor $D$. Then $X\cong X_{n,m}$.
\end{proposition}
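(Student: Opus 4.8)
The plan is to realize $\Bl_H\oX$ explicitly as a hypersurface in $\PP^1\times\oX$ and then to read off both $X_{n,m}$ and the proper transform $\widetilde{D}$ from the two standard affine charts. Write $g:=z_1\cdots z_n$ and $h:=1+u_1+\cdots+u_m$, so that set-theoretically $H=\{g=h=0\}$ sits inside $D=\{g=0\}$, and let $A:=\CC[z_1,\ldots,z_n,u_1^{\pm},\ldots,u_m^{\pm}]$ be the coordinate ring of $\oX$. The first step is to check that $g,h$ form a regular sequence in $A$: $g$ is a nonzerodivisor since $A$ is a domain, and $h$ restricts to the nonzero element $1+\sum u_i$ on each component $\{z_i=0\}$ of $D$, hence is a nonzerodivisor modulo $g$. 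This has two consequences I would use. First, $(g,h)$ has reduced quotient, so blowing up the ideal $(g,h)$ is the same as blowing up the reduced subvariety $H$. Second, an ideal generated by a regular sequence is of linear type, so the Rees algebra is $A[s_0,s_1]/(h s_0-g s_1)$, with $s_0,s_1$ of degree one mapping to $g,h$; equivalently, the hypersurface $\{h s_0=g s_1\}$ in $\PP^1\times\oX$ is irreducible of the right dimension, so it coincides with the closure of the graph of $[g:h]\colon\oX\dashrightarrow\PP^1$. Either way,
\[
  \Bl_H\oX \;=\; \{\,[s_0:s_1]\times(z,u)\ :\ h(u)\,s_0=g(z)\,s_1\,\}\ \subset\ \PP^1\times\oX,
\]
with exceptional divisor $E$ supported over $\{g=h=0\}=H$.

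Next I would work in the two charts of $\PP^1$. On $\{s_0\neq0\}$ set $z_0:=s_1/s_0$; the defining equation $h s_0=g s_1$ becomes $h=g z_0$, that is $z_0z_1\cdots z_n=1+u_1+\cdots+u_m$ with $z_i\in\CC$ and $u_j\in\CC^\times$, which is precisely $X_{n,m}$. On the other chart $\{s_1\neq0\}$ set $v:=s_0/s_1$; the equation becomes $z_1\cdots z_n=(1+u_1+\cdots+u_m)\,v$. It then remains to identify the proper transform $\widetilde{D}$ of $D=\{g=0\}$, and I claim it is exactly $\{s_0=0\}$, the complement of the first chart. In the chart $\{s_1\neq0\}$ the total transform of $D$ is $\{g=0\}=\{hv=0\}=\{h=0\}\cup\{v=0\}$; the piece $\{h=0\}$ maps into $H$ and so lies in $E$, while $\{v=0\}$ projects isomorphically onto $D$ and is therefore the strict transform there — and $\{v=0\}$ is just $\{s_0=0\}$. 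In the chart $\{s_0\neq0\}$, by contrast, the equation forces $h=g z_0=0$ wherever $g=0$, so the total transform $\{g=0\}$ of $D$ maps into $H$ and lies entirely in $E$; hence $\widetilde{D}$ does not meet this chart. Therefore $\widetilde{D}=\{s_0=0\}$, and
\[
  X=\Bl_H\oX\setminus\widetilde{D}=\{s_0\neq0\}\cong X_{n,m},
\]
which is the assertion.

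The geometric content here is minimal, so the step I expect to cause the most friction is the bookkeeping around the center being reducible and singular: $H$ is a union of the smooth pieces $\{z_i=0,\ \sum u_j=-1\}$ meeting along deeper strata, so one cannot appeal to standard facts about blowing up smooth centers. The two-generator presentation $(g,h)$ with $g,h$ a regular sequence is exactly what circumvents this, simultaneously giving the clean model above and the reducedness of the center; after that one just has to verify honestly that $\{v=0\}$ really is all of $\widetilde{D}$ in the chart $\{s_1\neq0\}$ (no spurious components, using that $h$ vanishes identically on no component of $D$) and that $\widetilde{D}$ misses $\{s_0\neq0\}$ for the reason above. None of this is deep, but it is where the argument must be written carefully rather than waved through.
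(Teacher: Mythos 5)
Your proof is correct and takes essentially the same route as the paper: both realize the blowup explicitly so that the chart where $z_0 = h/g$ (the paper's $g/f$) is regular is exactly the complement of the proper transform $\widetilde{D}$, producing the coordinate ring $A[z_0]/(z_0g-h) \cong \CC[X_{n,m}]$. What you add is a genuine justification, via the regular-sequence/linear-type argument, for the Rees-algebra presentation $A[s_0,s_1]/(hs_0-gs_1)$ and for the identification of $\widetilde{D}$ with $\{s_0=0\}$; the paper simply asserts that blowing up $H$ ``involves introducing $f/g, g/f$,'' and your care here is warranted since $H$ is reducible and singular, so one cannot invoke the usual picture for a smooth center.
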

\begin{proof}
  Let $f = z_1\cdots z_n$ and $g=1+u_1+\cdots + u_m.$ The center $H$ is defined by the equations $f=g=0,$ so that blowing up $H$ involves introducing $\frac{f}{g}, \frac{g}{f}$ into the ring of functions of $\oX.$ Deleting the proper transform of $D$ means it is sufficient to introduce the function $\frac{g}{f}$ into the coordinate ring. If we denote this new variable by $z_0,$ then the coordinate ring of $X$ is $\CC[z_1,\ldots,z_n,u_1^\pm,\ldots,u_m^\pm][z_0]/(z_0 f = g) = \CC[X_{n,m}].$
\end{proof}

The equation $z_0\cdot z_1 \cdots z_n = 1 + u_1 + \cdots u_m$ should be understood as a generalization of the usual ``cluster transformation equation'' $z_0z_1 = 1 + u.$ Note that in this simpler case, the variable $z_1$ is obtained from $z_0$ by a cluster mutation, whereas in the more general situation considered in this paper, there are $n+1$ variables $z_i$, corresponding to $n$ possible mutations.

On the symplectic side, this corresponds to the fact that the singular locus $\cA\subset \cH$ divides $\cH$ into $n$ chambers. In the constructions of \S \ref{sec:skel-calc}, we distinguished one of those chambers (the ``all-negative'' orthant), but we could equally have begun with a torus fiber in one of the other chambers instead. The difference between the torus fibers in the various chambers is a generalization of the familiar ``Clifford/Chekanov'' distinction in cluster symplectic geometry.

On the B-side, as we have seen, this new feature is due to the fact that the center $H$ of the GHK blowup intersects the singular locus of the divisor $D.$ Keeping track of contributions from these intersections is a crucial step in generalizing the results of \cite{GL}, which disregarded those contributions.

The results of this paper therefore suggest a strategy for studying mirror symmetry for an affine cluster variety $X$ via its SYZ fibration. The SYZ base should have a presentation by gluing together local pieces modeled by the singularities $\mathfrak{s}_{n,m}$ (or products of these). Each singularity has a monodromy-invariant space $\cH,$ and if there is a point in the intersection of all of these spaces, it should play the r\^ole of the torus $\TT_\emptyset$ in the skeleton $\LL$ of $X,$ with the remainder of the skeleton obtained by attaching and collapsing subtori in a manner prescribed by the geometry of the base, locally agreeing wih the construction in this paper (as illustrated in Figure \ref{fig:syz-skel}). This presentation should match a presentation of the mirror space $X^\vee$ in terms of the non-toric blowup construction of Proposition \ref{prop:gen-GHK}.
\vspace{2mm}

{\bf Acknowledgements.} This work is the direct outgrowth of a continuing project with Ian Le on cluster varieties, and my understanding of the generalized non-toric blowup construction of Proposition \ref{prop:gen-GHK} is due to conversations with him. My interest in the spaces $X_{n,m}$ comes from conversations with Vivek Shende about the r\^ole of these spaces in compactifying the large-volume manifolds described in \cite{GS20} (to be explained in forthcoming work), and I also thank Denis Auroux for advice and encouragement on the program sketched in the preceding paragraphs. I also was helped by conversations with Maxim Jeffs about Liouville isotopies, and I thank an anonymous referee for careful reading and suggestions which very much improved the paper. I am supported by an NSF Postdoctoral Research Fellowship, DMS-2001897.

\bibliographystyle{plain}
\bibliography{refs}

\begin{thebibliography}{10}

\bibitem{A06}
Mohammed Abouzaid.
\newblock Homogeneous coordinate rings and mirror symmetry for toric varieties.
\newblock {\em Geom. Topol.}, 10:1097--1156, 2006.

\bibitem{AAK}
Mohammed Abouzaid, Denis Auroux, and Ludmil Katzarkov.
\newblock Lagrangian fibrations on blowups of toric varieties and mirror symmetry for hypersurfaces.
\newblock {\em Publ. math. IH\'ES}, 123:199--282, 2016.

\bibitem{AS2x}
Mohammed Abouzaid and Zack Sylvan.
\newblock Homological mirror symmetry for local {SYZ} singularities.
\newblock In preparation.

\bibitem{Aur07}
Denis Auroux.
\newblock Mirror symmetry and {T}-duality in the complement of an anticanonical divisor.
\newblock {\em J. G\"okova Geom. Topol.}, 1:51--91, 2007.

\bibitem{Bo-ccc}
Alexei Bondal.
\newblock Derived categories of toric varieties.
\newblock In {\em Convex and Algebraic Geometry, {O}berwolfach Conference Reports}, volume~3, pages 284--286. 2006.

\bibitem{CPU}
Kwokwai Chan, Daniel Pomerleano, and Kazushi Ueda.
\newblock Lagrangian torus fibrations and homological mirror symmetry for the conifold.
\newblock {\em Commun. Math. Phys.}, 341:135--178, 2016.

\bibitem{Cieliebak-Eliashberg}
Kai Cieliebak and Yakov Eliashberg.
\newblock {\em From {S}tein to {W}einstein and back}, volume~59 of {\em American Mathematical Society Colloquium Publications}.
\newblock American Mathematical Society, Providence, RI, 2012.
\newblock Symplectic geometry of affine complex manifolds.

\bibitem{CKNS}
Laurent C\^ot\'e, Christopher Kuo, David Nadler, and Vivek Shende.
\newblock Perverse microsheaves.
\newblock arXiv preprint 2209.12998.

\bibitem{Courte-Liouville}
Sylvain Courte.
\newblock Liouville and {W}einstein manifolds.
\newblock Lecture notes, available at \url{https://www.mathematik.hu-berlin.de/~wendl/SFT8/Courte_precourse.pdf}.

\bibitem{EM}
Jonathan~David Evans and Mirko Mauri.
\newblock Constructing local models for lagrangian torus fibrations.
\newblock to appear in Annales Henri Lebesgue.

\bibitem{FLTZ2}
Bohan Fang, Chiu-Chu~Melissa Liu, David Treumann, and Eric Zaslow.
\newblock A categorification of {M}orelli's theorem.
\newblock {\em Inventiones mathematicae}, 186(1):79--114, Oct 2011.

\bibitem{FLTZ1}
Bohan Fang, Chiu-Chu~Melissa Liu, David Treumann, and Eric Zaslow.
\newblock T-duality and homological mirror symmetry for toric varieties.
\newblock {\em Advances in Mathematics}, 229(3):1873 -- 1911, 2012.

\bibitem{FLTZ3}
Bohan Fang, Chiu-Chu~Melissa Liu, David Treumann, and Eric Zaslow.
\newblock {The Coherent-Constructible Correspondence for Toric Deligne-Mumford Stacks}.
\newblock {\em International Mathematics Research Notices}, 2014(4):914--954, 11 2012.

\bibitem{G20}
Benjamin Gammage.
\newblock Mirror symmetry for {B}erglund-{H}\"ubsch {M}ilnor fibers.
\newblock arXiv preprint 2010.15570.

\bibitem{GL}
Benjamin Gammage and Ian Le.
\newblock Mirror symmetry for truncated cluster varieties.
\newblock {\em SIGMA Symmetry Integrability Geom. Methods Appl.}, 18:Paper No. 055, 30, 2022.

\bibitem{GS17}
Benjamin Gammage and Vivek Shende.
\newblock Mirror symmetry for very affine hypersurfaces.
\newblock {\em Acta Math.}, 229(2):287--346, 2022.

\bibitem{GS20}
Benjamin Gammage and Vivek Shende.
\newblock Homological mirror symmetry at large volume.
\newblock {\em Tunis. J. Math.}, 5(1):31--71, 2023.

\bibitem{GPS3}
Sheel Ganatra, John Pardon, and Vivek Shende.
\newblock Microlocal {M}orse theory of wrapped {Fukaya} categories.
\newblock arXiv preprint {1809.08807}.

\bibitem{GPS2}
Sheel Ganatra, John Pardon, and Vivek Shende.
\newblock Sectorial descent for wrapped {F}ukaya categories.
\newblock arXiv preprint {1809.03472}.

\bibitem{GPS1}
Sheel Ganatra, John Pardon, and Vivek Shende.
\newblock Covariantly functorial wrapped {Floer} theory on {Liouville} sectors.
\newblock {\em Publ. Math. IH\'{E}S}, 2019.

\bibitem{GHK1}
Mark Gross, Paul Hacking, and Sean Keel.
\newblock Mirror symmetry for log {C}alabi-{Y}au surfaces {I}.
\newblock {\em Publ. Math. IH\'{E}S}, 122:65--168, 06 2011.

\bibitem{GHK2}
Mark Gross, Paul Hacking, and Sean Keel.
\newblock Birational geometry of cluster algebras.
\newblock {\em Algebraic Geometry}, 2:137--175, 2013.

\bibitem{Kashiwara-Shapira}
Masaki Kashiwara and Pierre Schapira.
\newblock {\em Sheaves on manifolds}, volume 292 of {\em Grundlehren der mathematischen Wissenschaften [Fundamental Principles of Mathematical Sciences]}.
\newblock Springer-Verlag, Berlin, 1994.
\newblock With a chapter in French by Christian Houzel, Corrected reprint of the 1990 original.

\bibitem{Ku}
Tatsuki Kuwagaki.
\newblock The nonequivariant coherent-constructible correspondence for toric stacks.
\newblock {\em Duke Math. J.}, 169(11):2125--2197, 08 2020.

\bibitem{Lekili-Polishchuk}
Yank{\i} Lekili and Alexander Polishchuk.
\newblock Homological mirror symmetry for higher-dimensional pairs of pants.
\newblock {\em Compositio Mathematica}, 156(7):1310--1347, 2020.

\bibitem{M}
Grigory Mikhalkin.
\newblock Decomposition into pairs-of-pants for complex algebraic hypersurfaces.
\newblock {\em Topology}, 43(5):1035--1065, 2004.

\bibitem{Nwms}
David Nadler.
\newblock Wrapped microlocal sheaves on pairs of pants.
\newblock arXiv preprint 1604.00114.

\bibitem{Nad-CnW}
David Nadler.
\newblock Mirror symmetry for the {L}andau-{G}inzburg ${A}$ -model ${M}=\mathbb{C}^{n}$ , ${W}=z_{1}\cdots z_{n}$.
\newblock {\em Duke Math. J.}, 168(1):1--84, 01 2019.

\bibitem{Nad-mut}
David Nadler.
\newblock Wall-crossing for toric mutations.
\newblock In {\em Proceedings of the {G}\"{o}kova {G}eometry-{T}opology {C}onferences 2018/2019}, pages 67--89. Int. Press, Somerville, MA, 2020.

\bibitem{NS20}
David Nadler and Vivek Shende.
\newblock Sheaf quantization in {W}einstein symplectic manifolds.
\newblock arXiv preprint 2007.10154.

\bibitem{PT}
James Pascaleff and Dmitry Tonkonog.
\newblock The wall-crossing formula and {L}agrangian mutations.
\newblock {\em Adv. Math.}, 361:106850, 67, 2020.

\bibitem{Pom21}
Daniel Pomerleano.
\newblock Intrinsic mirror symmetry and categorical crepant resolutions.
\newblock arXiv preprint 2103.01200.

\bibitem{RZ2}
Helge Ruddat and Ilia Zharkov.
\newblock Tailoring a pair of pants.
\newblock {\em Adv. Math.}, 381:Paper No. 107622, 26, 2021.

\bibitem{seidel-smith-ramanujam}
Paul Seidel and Ivan Smith.
\newblock The symplectic topology of {R}amanujam's surface.
\newblock {\em Comment. Math. Helv.}, 80(4):859--881, 2005.

\bibitem{V}
Vivek Shende.
\newblock Toric mirror symmetry revisited.
\newblock {\em C. R. Math. Acad. Sci. Paris}, 360:751--759, 2022.

\bibitem{STW}
Vivek Shende, David Treumann, and Harold Williams.
\newblock On the combinatorics of exact {L}agrangian surfaces.
\newblock arXiv preprint 1603.07449.

\bibitem{Zhou-ccc}
Peng Zhou.
\newblock Twisted polytope sheaves and coherent-constructible correspondence for toric varieties.
\newblock {\em Sel. Math. New Ser.}, 25(1), 2019.

\bibitem{Zhou-skel}
Peng Zhou.
\newblock Lagrangian skeleta of hypersurfaces in $(\mathbb{C}^\times)^n$.
\newblock {\em Sel. Math. New Ser.}, 26(26), 2020.

\end{thebibliography}
\end{document}